\newcommand{\eps}{\varepsilon}
\newcommand{\D}{\mathrm{d}}
\newcommand{\OO}{\mathcal{O}}
\newtheorem{claim}{Claim}[section]
\newtheorem{thm}[claim]{Theorem}
\newtheorem{rems}[claim]{Remarks}
\title{Approximations of quantum-graph vertex couplings by singularly scaled rank-one operators}
\author{P. EXNER$^{1,2}$ and S.S. MANKO$^{1,3}$\footnote{On leave of absence from Pidstryhach Institute for Applied Problems of Mechanics and Mathematics, National Academy of Sciences of Ukraine, 3b Naukova str, 79060 Lviv, Ukraine}}
\date{\small $^1$Doppler Institute for Mathematical Physics and Applied Mathematics, \\ Czech Technical University in Prague, B\v{r}ehov\'{a} 7, 11519 Prague, Czechia \\ $^2$Nuclear Physics Institute ASCR, 25068 \v{R}e\v{z} near Prague, Czechia \\
$^3$Department of Physics, Faculty of Nuclear Science and Physical Engineering, \\ Czech Technical University in Prague, Pohrani\v{c}n\'{\i} 1288/1,  40501 D\v{e}\v{c}\'{\i}n, Czechia \\ \emph{e-mail: exner@ujf.cas.cz, stepan.manko@gmail.com}}
\begin{document}

\maketitle

\noindent \textbf{Abstract.} We investigate approximations of the vertex coupling on a star-shaped graph by families of operators with singularly scaled rank-one interactions. We find a family of vertex couplings, generalizing the $\delta'$-interaction on the line, and show that with a suitable choice of the parameters they can be approximated in this way in the norm-resolvent sense. We also analyze spectral properties of the involved operators and demonstrate convergence of the corresponding on-shell scattering matrices.

\medskip

\noindent \textbf{Mathematics Subject Classification (2010).} 81Q35, 81Q10.

\medskip

\noindent \textbf{Keywords.} Quantum graph, vertex coupling, approximation.

\section{Introduction}

Quantum graphs are a versatile model of many physical systems; we refer to the recent monograph \cite{BK} for an extensive bibliography. One of the central items of this theory are vertex coupling conditions used to match wave functions supported by graph edges. From general principles they have to be chosen to make the graph Hamiltonian self-adjoint. This is a simple task but the result leaves a lot a freedom through parameters entering those conditions the values of which have to be fixed. The background of such a choice is the question about the physical meaning of the coupling, important without any doubt; one has to keep mind that different vertex couplings give rise to different quantum dynamics on the graph.

A natural approach to the problem is to analyze various approximations of those couplings, either on the graph itself or using a tubular network shrinking to the graph `skeleton'. Even in the latter case, however, one considers approximations on the graph as an intermediate step -- cf. \cite{EP} and references therein. The task simplifies due to the fact that the vertex couplings, the physically interesting ones at least, are of a local nature; it is thus sufficient to solve the problem for a star-type graph with $n$ edges meeting in a single vertex. One usually begins with the most simple coupling, often called Kirchhoff, and investigates families of scaled interaction supported in the vicinity of the junction. The simplest example are potentials scaled with their mean preserved which give rise to one-parameter family of the so-called \mbox{$\delta$-couplings} \cite{Ex1}. This is, however, only a small subclass of the couplings allowed by the self-adjointness requirement and using potentials scaled in a more singular way many other conditions can be obtained -- without going into details we refer to \cite{EM} and the bibliography therein.

These scaled-potential approximations, however, do not yield all the admissible couplings, in particular, one cannot obtain in this way strongly singular matching conditions such as the so-called $\delta'$-coupling and its modifications, which are of interest mainly because their properties contrast in a sense to those of more regular couplings, for instance, a $\delta'$ junction is opaque at high energies.

Note that various results are known in the simplest nontrivial case $n=2$ where the coupling is nothing else that a generalized point interaction on the line \cite{AGHH}. In particular, an approximation of the $\delta'$-interaction on the line with the help of scaled rank-one operators was proposed longtime ago by \v{S}eba \cite{Se}. The aim of the present paper is to propose and analyze a similar approximation of a class of singular vertex couplings, given by relations (\ref{match}) below, by nonlocal potentials for a general star-shaped graph. Our main results are demonstration of the norm-resolvent convergence of such an approximation, Theorem~\ref{thm:LimOpSpectr} below, and convergence of the corresponding on-shell scattering matrices, Theorem~\ref{thm:ScatMatConv}. Let us add that as in the case $n=2$ the constructed approximation is non-generic, and furthermore, it represents a new generalization of the $\delta'$-interaction on the line, different from the two known ones \cite{Ex2}; in contrast to those it leans on the permutation asymmetry of the approximating operators.

\section{Preliminaries}

To begin with, we recall a few basic notions concerning metric graphs. In what follows, we focus on noncompact star-shaped graphs $\Gamma$ consisting of $n\in\mathbb{N}$ semi-infinite edges $\gamma_1,\dots,\gamma_n$ connected at a single vertex. A map $\psi: \Gamma\to\mathbb{C}$ is said to be a function on the graph and its restriction to the edge $\gamma_i$ will be denoted by $\psi_i$. Each edge $\gamma_i$ has a natural parametrization $x_i$ given by the arc length of the curve representing the edge, hence without loss of generality we may identify each $\gamma_i$ with the halfline $[0,\infty)$.
A differentiation is always related to this natural length parameter. We denote by $\psi'_i(0)$ the limit value of the derivative at the graph vertex taken conventionally in the outward direction, i.e. away from the vertex. The integral $\int_\Gamma \psi\,\D x$ of $\psi$ over $\Gamma$ is the sum of integrals over all edges $\sum_{i=1}^n\int_0^\infty \psi_i\,\D x_i$, the measure being the natural Lebesgue measure.

As usual, $L^2(\Gamma)$ denotes the Hilbert space of (equivalence classes of) such functions with the scalar product $(\psi_1,\psi_2) =\int_\Gamma\psi_1 \bar\psi_2\,\D x$. Next we introduce the Sobolev space $H^2(\Gamma)$ as the Banach space with the norm $\|\psi\|_{H^2(\Gamma)}= (\|\psi\|_{L^2(\Gamma)}+ \|\psi''\|_{L^2(\Gamma)})^{1/2}$, observing that neither the functions belonging to $H^2(\Gamma)$ nor their derivatives should be continuous at the graph vertex. Finally, we say that a function $\psi$ satisfies the Kirchhoff conditions at the graph vertex and write $\psi\in K(\Gamma)$ if $\psi$ is continuous at this vertex and satisfies the condition $\sum_{i=1}^n\psi'_i(0)=0$.

Given a star graph $\Gamma$ described above, we introduce the following family of Schr\"{o}dinger operators on $L^2(\Gamma)$ labeled by the parameter $\eps\in(0,1]$,
\begin{equation} \label{approxop}
-\Delta^\eps:=-\frac{\D^2}{\D x^2}+
\frac{\lambda(\eps)}{\eps^3}V_\eps(x)
\big\langle\cdot,V_\eps\big\rangle_\Gamma\,,
\qquad \mathfrak{D}(-\Delta^\eps)=H^2(\Gamma)
\cap K(\Gamma)\,,
\end{equation}
with $V_\eps(\cdot):=V(\frac\cdot\eps)$, where the real-valued function $V$ belongs to the class $L^1_\mathrm{loc}(\Gamma)$ and has a compact support and zero mean, i.e., $\int_\Gamma V\,\D x=0$. Without loss of generality we may (and shall) suppose that the support of $V$ is contained in the unit ball centered at the graph vertex. With respect to the edge indices, $V$ may be regarded as an $n\times n$ matrix function on $[0,\infty)$; we stress that it need not be diagonal. In a similar vein the differential part of $-\Delta^\eps$ is a shorthand for the operator which acts as the negative second derivative on each edge $\gamma_i$. The function $\lambda(\cdot)$ in the above expression is supposed to be real-valued for real $\eps$ and holomorphic in the vicinity of the origin. In addition, it satisfies the condition
\begin{equation} \label{scaling}
\lambda(\eps)=\lambda_0+\eps\lambda_1+\OO(\eps^2)\,,
\qquad \eps\to0\,,
\end{equation}
where $\lambda_0$ and $\lambda_1$ are nonzero real numbers. Our main goal in this paper is to investigate convergence of the operators $-\Delta^\eps$ as $\eps\to 0$ in the norm-resolvent topology.

To describe the outcome of the limiting process we need the following quantities:
\begin{equation} \label{form:theta}
\vartheta_i:=\int_0^\infty x_i V(x_i)\,\D x_i\,,\quad i=1,2,\dots,n\,,
\end{equation}
and
\begin{equation}\label{form:A}
A:=-
\sum_{i=1}^n
\int_0^\infty\int_0^\infty
\min\{x_i,y_i\}
\,
V(x_i)V(y_i)
\,\D x_i\D y_i
\,.
\end{equation}
Using them, we define the {\it limit operator} $-\Delta_\beta$ as the one acting as
\[
-\Delta_\beta\psi:=-\psi''
\]
on functions $\psi\in{H}^2(\Gamma)$ that obey the matching conditions
\begin{equation} \label{match}
\frac{\psi_i(0)-\psi_j(0)}
{\vartheta_i-\vartheta_j}=
\beta
\sum_{\ell=1}^n
\vartheta_\ell\, \psi'_\ell(0)\,,
\quad 1\leq i<j\leq n\,,\qquad
\sum_{\ell=1}^n
\psi'_\ell(0)=0\,,
\end{equation}
where we adopt the convention that $\psi_i(0)=\psi_j(0)$ holds if $\vartheta_i=\vartheta_j$. Here
\begin{equation}\label{beta}
\beta=
\begin{cases}
\frac1{\lambda_1A^2}&\textnormal{if $\lambda_0A=1$}\,,\\
\phantom{-}0&\textnormal{otherwise}.
\end{cases}
\end{equation}

\begin{rems} {\rm
\begin{itemize} \setlength{\itemsep}{-3pt}
\item [(i)] In the particular case $n=2$ one gets from \eqref{match} the well-known one-dimensional $\delta'$-interaction with the coupling parameter $\beta$.
\item [(ii)] The boundary conditions that determine the domain of the limit operator $-\Delta_\beta$ can alternatively be written in the conventional form \cite{KS} as
\begin{equation}\label{eq:Bound_Cond}
\mathcal{A}\Psi(0)+\mathcal{B}\Psi'(0)=0\,,
\end{equation}
where
\[
\mathcal{A}=
\begin{pmatrix}
0   &   0   &   0   &   \dots   &   0
\\
\frac{1}{\vartheta_1-\vartheta_2}   &   \frac{1}{\vartheta_2-\vartheta_1}  &   0   &  \dots   &   0
\\
\frac{1}{\vartheta_1-\vartheta_3}   &   0   &   \frac{1}{\vartheta_3-\vartheta_1}  & \dots   &   0
\\
\vdots   &   \vdots   &   \vdots   &   \ddots  &   \vdots
\\
\frac{1}{\vartheta_1-\vartheta_n}   &   0   &   0  &   \dots   &   \frac{1}{\vartheta_n-\vartheta_1}
\end{pmatrix}
\,,
\quad
\mathcal{B}=
-\beta
\begin{pmatrix}
\frac1\beta   &   \frac1\beta   &   \dots   &   \frac1\beta
\\
 \vartheta_1&
\vartheta_2&
\dots&
\vartheta_n
\\
\vartheta_1&
\vartheta_2&
\dots&
\vartheta_n
\\
\vdots   &   \vdots   &   \ddots   &   \vdots
\\
\vartheta_1&
\vartheta_2&
\dots&
\vartheta_n
\end{pmatrix}\,.
\]
It is useful to adopt the convention that all entries of the first row of $\mathcal{B}$ are $-1$ even if $\beta=0$.
As it is well known,  to make a graph Hamiltonian self-adjoint, a graph vertex at which $n$ edges meet should be characterized by boundary conditions \eqref{eq:Bound_Cond} in which the $n\times n$ matrices $\mathcal{A}$, $\mathcal{B}$ are such that the product $\mathcal{AB}^*$ is self-adjoint, while the $2n\times n$ matrix $(\mathcal{A}|\mathcal{B})$ has rank $n$; it is straightforward to check that this is the case here.
\item [(iii)] If $\vartheta_1=\vartheta_j$ holds for some $j=2,\dots,n$,v then the $j$th row of the above matrix $(\mathcal{A}|\mathcal{B})$ should be replaced by the vector $(1,0,\dots,0,-1,0,\dots,0)$ of the length $2n$ with $-1$ at the $j$th place. In what follows, we assume without loss of generality that all the $\vartheta_i$ are different.
\end{itemize}
}\end{rems}

\noindent Having stated the problem, let us review briefly the following contents of the paper. In the next section, we describe properties of the limit operator $-\Delta_\beta$, in particular, we characterize its spectrum as well as its resolvent, and describe the corresponding scattering matrix -- cf.~Theorems~\ref{prop:Green_Funct}--\ref{thm:LimOp_ScatMat}. In Section~\ref{s:convergence} we first discuss the structure of the resolvent of the approximation operators \eqref{approxop} with the aim prove our first main result, namely its closeness to that of the limit operator, stated in Theorem~\ref{thm:ResolventConvergence}, together with its spectral consequences, Theorem~\ref{thm:Approx_Op_Spec}. Finally, Section~\ref{s:scattering} contains the other main results of the paper, Theorem~\ref{thm:ScatMatConv}, which says that the on-shell scattering matrices of the operators \eqref{approxop} approximate the corresponding on-shell S-matrix of the limit one.

\section{Limit operator} \label{s:limit}

After the above preliminaries, let us turn to basic properties of $-\Delta_\beta$. Starting from the explicit expression of the resolvent $(-\Delta_\beta-k^2)^{-1}$ we are going to describe the structure of the spectrum and scattering amplitudes of the limit operator.

\begin{thm}\label{prop:Green_Funct}
The resolvent $(-\Delta_\beta-k^2)^{-1}$ of the limit operator is an integral operator on $L^2(\Gamma)$ with the kernel
\begin{equation}\label{eq:Green_Function}
\Xi_k(x_i,y_j)=G_k(x_i,y_j)+\Lambda_{ij}(k^2)
\exp(\mathrm{i}k(x_i+y_j))\,,
\quad i,j=1,2,\dots,n\,,
\end{equation}
with $k^2\in\rho(-\Delta_\beta)$, $\Im k>0$, and with $\Lambda$ of the form
\begin{equation*}
\Lambda_{ij}(k^2)=
\frac{\beta\Pi_{ij}}{1+\mathrm{i}k\beta B}\,,
\end{equation*}
where
\begin{equation}\label{form:B}
B:=
\frac1n
\Bigg(
\sum_{i=1}^n
\vartheta_i
\Bigg)^2
-
\sum_{i=1}^n
\vartheta_i^2\,,
\quad
\Pi_{ij}:=
\Bigg(\frac1n\sum_{\ell=1}^n\vartheta_\ell-\vartheta_i\Bigg)
\Bigg(\frac1n\sum_{\ell=1}^n\vartheta_\ell-\vartheta_j\Bigg)
\,.
\end{equation}
In the relation \eqref{eq:Green_Function},
\begin{equation}\label{free:Green_Function}
G_k(x_i,y_j)=
\frac{\mathrm{i}}{2k}
\bigg[
\delta_{ij}\exp(\mathrm{i}k|x_i-y_j|)
+\bigg(\frac2n-\delta_{ij}\bigg)
\exp(\mathrm{i}k(x_i+y_j))
\bigg]
\end{equation}
is the integral kernel of the resolvent of the free Hamiltonian $-\Delta_0$.
\end{thm}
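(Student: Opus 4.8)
The plan is to construct the resolvent kernel by hand, taking the free (Kirchhoff) Green's function as a backbone and adding a finite-rank correction carried by the decaying edge solutions, whose free parameters are then pinned down by the vertex conditions \eqref{match}. The motivation is that $-\Delta_\beta$ and the Kirchhoff operator $-\Delta_0$ are both self-adjoint extensions of the same symmetric Laplacian (cf. Remark (ii)), so a Krein-type reasoning predicts that their resolvents differ by a term built from the boundary data at the vertex; on a semi-infinite edge the only square-integrable solution of $-u''=k^2u$ with $\Im k>0$ is proportional to $\exp(\mathrm{i}k x_i)$. This leads directly to the ansatz $\Xi_k(x_i,y_j)=G_k(x_i,y_j)+\Lambda_{ij}(k^2)\exp(\mathrm{i}k(x_i+y_j))$ with an unknown matrix $\Lambda$.

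By construction this ansatz satisfies everything except, possibly, the vertex conditions: the correction is, in $x_i$, a smooth homogeneous solution that decays at infinity, so it neither disturbs the $L^2$ behaviour nor the $\delta$-source generated by $G_k$ on the diagonal block. Hence $\Lambda$ is determined solely by requiring that $x_i\mapsto\Xi_k(x_i,y_j)$ obey \eqref{match} for every fixed source point $y_j$. I would evaluate the boundary data from \eqref{free:Green_Function}, which yields for $G_k$ alone the values $G_k(0,y_j)=\frac{\mathrm{i}}{nk}\exp(\mathrm{i}k y_j)$ and $\partial_{x_i}G_k(x_i,y_j)|_{x_i=0}=(\delta_{ij}-\frac1n)\exp(\mathrm{i}k y_j)$; adding the correction gives $\Xi_k(0,y_j)=(\frac{\mathrm{i}}{nk}+\Lambda_{ij})\exp(\mathrm{i}k y_j)$ and $\partial_{x_i}\Xi_k(x_i,y_j)|_{x_i=0}=(\delta_{ij}-\frac1n+\mathrm{i}k\Lambda_{ij})\exp(\mathrm{i}k y_j)$.

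Inserting these into \eqref{match} converts the two vertex conditions into linear relations for $\Lambda$. Writing $\bar\vartheta:=\frac1n\sum_\ell\vartheta_\ell$, the derivative-sum condition collapses to $\sum_\ell\Lambda_{\ell j}=0$, which holds because $\sum_\ell\Pi_{\ell j}=0$, a consequence of $\sum_\ell(\bar\vartheta-\vartheta_\ell)=0$. For the jump condition I would exploit the rank-one factorisation $\Pi_{ij}=(\bar\vartheta-\vartheta_i)(\bar\vartheta-\vartheta_j)$ coming from \eqref{form:B}: up to the common factor $\exp(\mathrm{i}k y_j)$, its left-hand side equals $(\vartheta_j-\bar\vartheta)$ times the scalar $\beta/(1+\mathrm{i}k\beta B)$, because the ratio $(\Pi_{pj}-\Pi_{qj})/(\vartheta_p-\vartheta_q)=\vartheta_j-\bar\vartheta$ does not depend on the pair $(p,q)$; on the right-hand side the same quantity appears once one evaluates $\sum_\ell\vartheta_\ell\Lambda_{\ell j}$ with the help of the identity $\sum_\ell\vartheta_\ell(\bar\vartheta-\vartheta_\ell)=B$ and collects the common factor. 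Both conditions are then satisfied precisely by $\Lambda_{ij}=\beta\Pi_{ij}/(1+\mathrm{i}k\beta B)$.

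It remains to argue that the operator $R_k$ with kernel $\Xi_k$ really is the resolvent. I would observe that $R_k$ is bounded on $L^2(\Gamma)$ — its principal part $G_k$ is the free resolvent and the correction is finite-rank with a square-integrable kernel for $\Im k>0$ — that it maps $L^2(\Gamma)$ into $\mathfrak{D}(-\Delta_\beta)$ (the boundary data of $R_kf$ satisfy the linear homogeneous relations \eqref{match} because those of $\Xi_k(\cdot,y_j)$ do), and that $(-\Delta_\beta-k^2)R_k=I$; self-adjointness of $-\Delta_\beta$ and uniqueness of the resolvent on $\rho(-\Delta_\beta)$ then identify $R_k$ with $(-\Delta_\beta-k^2)^{-1}$. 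The last assertion of the theorem, that $G_k$ is the free resolvent, is the special case $\beta=0$, for which \eqref{match} degenerates to continuity together with $\sum_\ell\psi_\ell'(0)=0$, i.e. the Kirchhoff conditions. The one genuinely nontrivial point — and the place where the precise form of $\Lambda$ is forced — is the algebra of the jump condition: that a single scalar denominator $1+\mathrm{i}k\beta B$ can meet all $\binom{n}{2}$ pairwise relations at once rests entirely on the rank-one structure of $\Pi$. I would also keep an eye on the endpoint $y_j=0$ when differentiating $\exp(\mathrm{i}k|x_i-y_j|)$ and on the standing assumption (Remark (iii)) that the $\vartheta_i$ are distinct, which legitimises dividing by $\vartheta_p-\vartheta_q$.
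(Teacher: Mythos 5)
Your proposal is correct and rests on the same skeleton as the paper's proof: a Krein-type ansatz $\Xi_k=G_k+\Lambda_{ij}\exp(\mathrm{i}k(x_i+y_j))$ whose matrix $\Lambda$ is pinned down by the vertex conditions, with exactly the boundary data $(\frac{\mathrm{i}}{nk}+\Lambda_{ij})\mathrm{e}^{\mathrm{i}ky_j}$ and $(\delta_{ij}-\frac1n+\mathrm{i}k\Lambda_{ij})\mathrm{e}^{\mathrm{i}ky_j}$ that the paper also computes. Where you genuinely diverge is in the decisive linear-algebra step. The paper turns the vertex conditions into the matrix equation $\mathcal{A}\tilde\Lambda+\mathrm{i}k\mathcal{B}\tilde\Lambda+\mathcal{B}=0$ with $\tilde\Lambda_{ij}=\Lambda_{ij}+\frac{\mathrm{i}}{kn}$ and \emph{derives} $\tilde\Lambda=-(\mathcal{A}+\mathrm{i}k\mathcal{B})^{-1}\mathcal{B}$ by an explicit Gauss elimination, introducing the auxiliary quantities $c_j$ and $d_{ij}$ and reading off $\Lambda_{ij}$ from the last row. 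You instead \emph{verify} the stated $\Lambda$ directly against \eqref{match}, exploiting the rank-one factorisation $\Pi_{ij}=(\bar\vartheta-\vartheta_i)(\bar\vartheta-\vartheta_j)$ together with the identities $\sum_\ell(\bar\vartheta-\vartheta_\ell)=0$ (which gives $\sum_\ell\Lambda_{\ell j}=0$, i.e.\ the Kirchhoff-sum condition) and $\sum_\ell\vartheta_\ell(\bar\vartheta-\vartheta_\ell)=B$ (which makes the jump conditions close with the single denominator $1+\mathrm{i}k\beta B$). Your route is shorter and more illuminating — it isolates exactly why one scalar denominator serves all $\binom{n}{2}$ pairwise relations — at the price of being non-constructive; you compensate appropriately by supplying the identification argument (boundedness, mapping into $\mathfrak{D}(-\Delta_\beta)$, $(-\Delta_\beta-k^2)R_k=I$, uniqueness of the resolvent) that the paper leaves implicit in its appeal to Krein's formula. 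All the computations you quote check out, including the boundary values of $G_k$ and the evaluation $\sum_\ell\vartheta_\ell\Lambda_{\ell j}=\beta B(\bar\vartheta-\vartheta_j)\,/\,(1+\mathrm{i}k\beta B)$.
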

\begin{proof}
According to Krein's formula the sought Green's function is given by \eqref{eq:Green_Function} with the matrix $\Lambda$ to be found.
Suppose that $\psi$ solves the equation $(-\Delta_\beta-k^2)\psi=\phi$, then $\psi=(-\Delta_\beta-k^2)^{-1}\phi$ and relation \eqref{eq:Green_Function} allows us to write the function $\psi$ explicitly,
\[
\psi(x_i)=\sum_{j=1}^n\int_0^\infty\big(G_k(x_i,y_j)
+\Lambda_{ij}(k^2)\exp(\mathrm{i}k(x_i+y_j))\big)\phi(y_j)\,
\D y_j.
\]
Since the resolvent maps the whole space $L^2(\Gamma)$ onto the domain $\mathfrak{D}(-\Delta_\beta)$ of our operator, the function $\psi$ has to satisfy the boundary conditions \eqref{eq:Bound_Cond} at the vertex.
Using the explicit form of $G_k$, we find that
\begin{align*}
\psi\big|_{x_i=0}
&=\sum_{j=1}^n\bigg(\frac{\mathrm{i}}{kn}+\Lambda_{ij}(k^2)\bigg)
\int_0^\infty
\phi(y_j)\exp(\mathrm{i}ky_j)\,\D y_j\,,
\\
\psi'\big|_{x_i=0}
&=
\sum_{j=1}^n\bigg(\mathrm{i}k\Lambda_{ij}(k^2)+
\delta_{ij}-\frac{1}{n}\bigg)
\int_0^\infty
\phi(y_j)\exp(\mathrm{i}ky_j)\,\D y_j\,.
\end{align*}
Substituting the above relations into \eqref{eq:Bound_Cond}, we get a system of equations,
\begin{align*}
\sum_{j=1}^n
\sum_{i=1}^n
\int_0^\infty
\bigg[
\mathcal{A}_{\ell i}
\bigg(
\frac{\mathrm{i}}{kn}&+\Lambda_{ij}(k^2)
\bigg)
\\&+
\mathcal{B}_{\ell i}
\bigg(\mathrm{i}k\Lambda_{ij}(k^2)
+\delta_{ij}-\frac{1}{n}
\bigg)
\bigg]
\phi(y_j)\exp(\mathrm{i}ky_j)\,\D y_j=0
\end{align*}
for $\ell=1,2,\dots,n$. Next we require that the left-hand side vanishes for any $\phi$, which yields the condition
$
\mathcal{A}\tilde\Lambda+ \mathrm{i}k\mathcal{B}\tilde\Lambda+\mathcal{B}=0
$,
where $\tilde\Lambda_{ij}(k^2):=\Lambda_{ij}(k^2)+\frac{\mathrm{i}}{kn}$. This leads in a straightforward way to the following representation of the matrix $\tilde\Lambda(k^2)$,
\[
\tilde\Lambda(k^2)=-(\mathcal{A}+\mathrm{i}k\mathcal{B})^{-1}
\mathcal{B}.
\]
Next, we apply the Gauss elimination method to get the chain of equivalences
\[
\big(
    -(\mathcal{A}+\mathrm{i}k\mathcal{B})
    \big|
    \mathcal{B}
\big)
    \sim
    \dots
    \sim
\big(
    I
    \big|
    \underbrace{-(\mathcal{A}+\mathrm{i}k\mathcal{B})^{-1}
    \mathcal{B}}\limits_{\tilde\Lambda(k^2)}
\big);
\]
then by equivalent-row manipulations we pass to the matrix $(\mathcal{C}|\mathcal{D} )$, where
\[
\mathcal{C}=
\mathrm{i}k
\begin{pmatrix}
\prod\limits_{\ell=1}^nc_\ell    &0  &\dots&0&0
\\
0&\prod\limits_{\ell=2}^nc_\ell&\dots&0&0
\\
\vdots &\vdots &\ddots &\vdots &\vdots
\\
0 &0  &\dots &c_{n-1}c_n &0
\\
0 &0 &\dots & 0& c_n
\end{pmatrix},
\]
and
\[
\mathcal{D}=
\begin{pmatrix}
d_{11}
\prod\limits_{\ell=1}^{n-1}c_\ell
&
d_{12}
\prod\limits_{\ell=1}^{n-1}c_\ell
&\dots
&d_{1n-1}
\prod\limits_{\ell=1}^{n-1}c_\ell
&d_{1n}
\prod\limits_{\ell=1}^{n-1}c_\ell
\\
d_{21}
\prod\limits_{\ell=2}^{n-1}c_\ell
&
d_{22}
\prod\limits_{\ell=2}^{n-1}c_\ell
&\dots
&d_{2n-1}
\prod\limits_{\ell=2}^{n-1}c_\ell
&d_{2n}
\prod\limits_{\ell=2}^{n-1}c_\ell
\\
\vdots  &\vdots &\ddots &\vdots &\vdots
\\
d_{n-11}
c_{n-1}
&
d_{n-12}
c_{n-1}
&\dots
&d_{n-1n-1}
c_{n-1}
&d_{n-1n}
c_{n-1}
\\
d_{n1}
&
d_{n2}
&\dots
&
d_{nn-1}
&
d_{nn}
\end{pmatrix}
\]
with
$c_j:=j+\mathrm{i}k\beta \big(( \sum_{\ell=1}^j\vartheta_\ell)^2-j\sum_{\ell=1}^j\vartheta_\ell^2\big)$
and
$d_{ij}:=-1+\mathrm{i}k\beta(n\Pi_{ij}-B)$.
Consequently, we can divide each row of $(\mathcal{C}|\mathcal{D})$ by the corresponding diagonal element of $\mathcal{C}$. This yields the relation $(I|\tilde\Lambda)$, where the entries of $\tilde\Lambda$ are given by the formula $\tilde\Lambda_{ij}= \frac{d_{ij}}{\mathrm{i}kc_n}$. Thus
\begin{equation}\label{lambdaIJ}
\Lambda_{ij}=
\tilde\Lambda_{ij}+\frac1{\mathrm{i}kn}
=
\frac{1+\mathrm{i}k\beta B+d_{ij}}
{\mathrm{i}kn(1+\mathrm{i}k\beta B)}
=\frac{\beta\Pi_{ij}}{1+\mathrm{i}k\beta B},
\end{equation}
which therefore completes the proof of the theorem.
\end{proof}

\begin{thm}\label{thm:LimOpSpectr}
The essential spectrum of $-\Delta_\beta$ is purely absolutely continuous and covers the nonnegative real axis, while the singularly continuous spectrum is empty,
\[
    \sigma_\mathrm{ess}(-\Delta_\beta)=
    \sigma_\mathrm{ac}(-\Delta_\beta)=[0,\infty)\,,
    \quad
    \sigma_\mathrm{sc}(-\Delta_\beta)=\emptyset\,.
\]
If $\beta<0$, the operator $-\Delta_\beta$ has precisely one negative eigenvalue, namely its point spectrum $\sigma_\mathrm{p}(-\Delta_\beta)$ is
\[
\sigma_\mathrm{p}(-\Delta_\beta)=
\bigg\{-\frac{1}{\beta^2B^2}\bigg\}.
\]
If $\beta>0$, the limit operator has no eigenvalues,
\[
\sigma_\mathrm{p}(-\Delta_\beta)=\emptyset\,,\quad \beta\notin(-\infty,0)\,.
\]
\end{thm}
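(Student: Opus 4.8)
The strategy is to read off everything from the explicit resolvent obtained in Theorem~\ref{prop:Green_Funct}, exploiting the analytic structure of the kernel $\Xi_k$ in the spectral variable $k$. The starting observation is that the difference $(-\Delta_\beta-k^2)^{-1}-(-\Delta_0-k^2)^{-1}$ is the integral operator with kernel $\Lambda_{ij}(k^2)\exp(\mathrm{i}k(x_i+y_j))$, which has finite rank; more precisely, writing $v_i:=\frac1n\sum_{\ell}\vartheta_\ell-\vartheta_i$ one sees from \eqref{form:B} that $\Pi_{ij}=v_iv_j$, so $\Pi$ is a rank-one matrix and the resolvent perturbation is a rank-one operator. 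By Weyl's theorem on the stability of the essential spectrum under such perturbations, $\sigma_{\mathrm{ess}}(-\Delta_\beta)=\sigma_{\mathrm{ess}}(-\Delta_0)=[0,\infty)$. It is convenient to record here the sign of $B$: applying the Cauchy--Schwarz inequality to the vectors $(\vartheta_1,\dots,\vartheta_n)$ and $(1,\dots,1)$ gives $B=\frac1n(\sum_i\vartheta_i)^2-\sum_i\vartheta_i^2=-\sum_i v_i^2\le0$, with strict inequality since the $\vartheta_i$ were assumed distinct; thus $B<0$, and this single fact will govern all the sign conditions below.

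Next I would settle the absolutely continuous character of the spectrum on $[0,\infty)$ together with the emptiness of $\sigma_{\mathrm{sc}}$. The decisive feature is that for real $k\neq0$ the denominator $1+\mathrm{i}k\beta B$ has real part equal to one and therefore never vanishes, so $\Lambda_{ij}(k^2)$, and hence the whole kernel $\Xi_k$, extends continuously from $\{\Im k>0\}$ to the punctured real axis. Consequently the boundary values $(-\Delta_\beta-(\lambda\pm\mathrm{i}0))^{-1}$ exist in the appropriate weak sense and depend continuously on $\lambda\in(0,\infty)$; feeding this into Stone's formula shows that the spectral measure, taken between suitable vectors, is absolutely continuous there. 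Equivalently, the resolvent difference being trace class, the invariance of the absolutely continuous spectrum yields $\sigma_{\mathrm{ac}}(-\Delta_\beta)=\sigma_{\mathrm{ac}}(-\Delta_0)=[0,\infty)$, while the continuity of the boundary values rules out a singular continuous part as well as embedded eigenvalues, the latter precisely because $1+\mathrm{i}k\beta B\neq0$ for real $k$. Together with the first paragraph this gives $\sigma_{\mathrm{ess}}=\sigma_{\mathrm{ac}}=[0,\infty)$ and $\sigma_{\mathrm{sc}}=\emptyset$.

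Finally, the discrete spectrum is located by finding the poles of the resolvent in $\mathbb{C}\setminus[0,\infty)$, that is, the zeros of $1+\mathrm{i}k\beta B$ in the physical half-plane $\Im k>0$. Solving $1+\mathrm{i}k\beta B=0$ gives the single value $k=\mathrm{i}/(\beta B)$, which lies in the upper half-plane exactly when $\beta B>0$; in view of $B<0$ this occurs if and only if $\beta<0$. Writing $k=\mathrm{i}\kappa$ with $\kappa=1/(\beta B)>0$, the corresponding spectral point is $k^2=-\kappa^2=-1/(\beta^2B^2)$, which is negative as required, and the associated eigenfunction, assembled from the decaying exponentials $\exp(-\kappa x_i)$ on the edges, is manifestly square-integrable. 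Since the residue of $\Lambda$ at the pole is proportional to the rank-one matrix $\Pi$, the residue of the resolvent is rank one and the eigenvalue is therefore simple. For $\beta>0$ the root $k=\mathrm{i}/(\beta B)$ falls into the lower half-plane and produces no pole of the resolvent, while for $\beta=0$ the resolvent reduces to the free one; in either case there are no eigenvalues.

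The mechanical parts are the essential-spectrum statement and the eigenvalue count, which reduce to the rank-one structure of $\Pi$ and the sign of $B$. The hard part will be the second paragraph: turning the explicit kernel into the spectral conclusion, namely establishing the limiting absorption principle rigorously and justifying the passage through Stone's formula so as to certify both the absolute continuity on $(0,\infty)$ and the complete absence of a singular continuous component, including the behaviour near the threshold $k=0$.
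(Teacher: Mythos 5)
Your proposal is correct and follows essentially the same route as the paper: Weyl's theorem applied to the finite-rank resolvent difference for the essential spectrum, boundary values of the explicit kernel (Reed--Simon~XIII.20 in the paper) for absolute continuity and the absence of a singular continuous part, and the pole of $(1+\mathrm{i}k\beta B)^{-1}$ for the point spectrum; you even make explicit the sign fact $B=-\sum_i v_i^2<0$ that the paper leaves implicit but needs in order to place the pole $k=\mathrm{i}/(\beta B)$ in the correct half-plane. The one soft spot is your claim that non-vanishing of $1+\mathrm{i}k\beta B$ on the real axis rules out embedded eigenvalues: that inference is not automatic, and the paper instead checks directly that $-\psi''=k^2\psi$ subject to \eqref{match} has no square-integrable solutions for $k^2\ge0$ (on each half-line such a solution is a combination of $\mathrm{e}^{\pm\mathrm{i}kx}$, never in $L^2$ unless trivial), which is the cleaner one-line argument.
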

\begin{proof}
Since $(-\Delta_\beta-k^2)^{-1} -(-\Delta_0-k^2)^{-1}$, $k^2\in\rho(-\Delta_\beta)$, is of finite rank in view of \eqref{eq:Green_Function},
Weyl's essential spectrum theorem \cite[Theorem~XIII.14]{RS} implies that the essential spectrum of $-\Delta_\beta$ is not affected by the perturbation, i.e. $\sigma_\mathrm{ess}(-\Delta_\beta) =\sigma_\mathrm{ess}(-\Delta_0)=[0,\infty)$.
Using \eqref{eq:Green_Function} in combination with Theorem XIII.20 of \cite{RS}, one can check easily the absence of $\sigma_\mathrm{sc}(-\Delta_\beta)$. The structure of the point spectrum of $-\Delta_\beta$ for negative $\beta$ and the absence of negative eigenvalues for nonnegative one follow from the explicit meromorphic structure of the resolvent \eqref{eq:Green_Function}. On the other hand we note that the pole in the right-hand side of \eqref{eq:Green_Function} for $\beta>0$ corresponds to a resonance (antibound state). Finally, a short computation shows that the equation $-\Delta_\beta\psi=k^2\psi$ has no square integrable solutions for nonnegative $k$, which, in turn, leads to the absence of nonnegative eigenvalues for all real $\beta$.
\end{proof}

\begin{thm}\label{thm:LimOp_ScatMat}
For any momentum $k>0$ the on-shell scattering matrix $\mathcal{S}(k)$ for the pair $(-\Delta_\beta,-\Delta_0)$ takes the form
\[
\mathcal{S}_{ij}(k)
=\frac2n-\delta_{ij}-
\frac{2\mathrm{i}k\beta\Pi_{ij}}{1+\mathrm{i}k\beta B}
\]
with $\Pi_{ij}$ and $B$ given by relations \eqref{form:B}.
\end{thm}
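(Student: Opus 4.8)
The plan is to compute $\mathcal{S}(k)$ directly from its definition through generalized eigenfunctions, substituting the scattering \emph{Ansatz} into the matching conditions \eqref{match} and solving the resulting finite linear system. At energy $k^2>0$ the relevant solutions of $-\psi''=k^2\psi$ on the $i$th edge have the form $\psi_i(x_i)=a_i\mathrm{e}^{-\mathrm{i}kx_i}+b_i\mathrm{e}^{\mathrm{i}kx_i}$, where $a_i$ is the amplitude of the wave incoming towards the vertex and $b_i$ that of the outgoing one; the on-shell scattering matrix is then the map $\mathcal{S}(k)\colon(a_i)\mapsto(b_i)$. First I would record the boundary data $\psi_i(0)=a_i+b_i$ and, using the outward-derivative convention fixed in the Preliminaries, $\psi_i'(0)=\mathrm{i}k(b_i-a_i)$.

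Next I would feed these into \eqref{match}. The condition $\sum_\ell\psi_\ell'(0)=0$ becomes $\sum_\ell b_\ell=\sum_\ell a_\ell$, while the first family of relations says that $\big((a_i+b_i)-(a_j+b_j)\big)/(\vartheta_i-\vartheta_j)$ is independent of the pair $(i,j)$ and equal to the common value $C:=\mathrm{i}k\beta\sum_\ell\vartheta_\ell(b_\ell-a_\ell)$. This independence forces the affine structure $a_i+b_i=C\vartheta_i+D$ for a single scalar $D$, so that $b_i=C\vartheta_i+D-a_i$, with $C$ and $D$ still to be fixed self-consistently in terms of $(a_\ell)$.

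It then remains to solve the two scalar constraints. The sum rule gives $nD+C\sum_\ell\vartheta_\ell=2\sum_\ell a_\ell$, hence $D=\tfrac2n\sum_\ell a_\ell-C\bar\vartheta$ with $\bar\vartheta:=\tfrac1n\sum_\ell\vartheta_\ell$; substituting $b_\ell-a_\ell=C\vartheta_\ell+D-2a_\ell$ into the definition of $C$ produces a single linear equation for $C$, in which the combination $\sum_\ell\vartheta_\ell^2-\tfrac1n(\sum_\ell\vartheta_\ell)^2=-B$ appears and yields $C(1+\mathrm{i}k\beta B)=2\mathrm{i}k\beta\sum_\ell(\bar\vartheta-\vartheta_\ell)a_\ell$. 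Inserting the resulting $C$ and $D$ back into $b_i=C(\vartheta_i-\bar\vartheta)+\tfrac2n\sum_\ell a_\ell-a_i$ and recognizing $(\bar\vartheta-\vartheta_i)(\bar\vartheta-\vartheta_\ell)=\Pi_{i\ell}$ gives $b_i=\sum_j\big(\tfrac2n-\delta_{ij}-\tfrac{2\mathrm{i}k\beta\Pi_{ij}}{1+\mathrm{i}k\beta B}\big)a_j$, which is precisely the asserted form of $\mathcal{S}_{ij}(k)$.

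I do not anticipate a genuine obstacle here: the argument amounts to solving an $n\times n$ linear system governed by the self-adjoint vertex conditions, and its solvability for every $k>0$ is guaranteed because $1+\mathrm{i}k\beta B$ cannot vanish for real $k$, $\beta$, $B$. The only delicate points are bookkeeping ones, namely keeping the outward-derivative sign straight, handling the degenerate case $\vartheta_i=\vartheta_j$ through the convention adopted below \eqref{match}, and correctly collapsing the $\vartheta$-sums into $B$ and $\Pi_{ij}$. As a consistency check I would note that the answer factorizes as $\mathcal{S}_{ij}(k)=\big(\tfrac2n-\delta_{ij}\big)-2\mathrm{i}k\Lambda_{ij}(k^2)$ with $\Lambda$ from Theorem~\ref{prop:Green_Funct}, i.e.\ the Kirchhoff vertex matrix corrected by the same rank-one term that already entered the resolvent; this reflects the alternative route of reading $\mathcal{S}(k)=-(\mathcal{A}+\mathrm{i}k\mathcal{B})^{-1}(\mathcal{A}-\mathrm{i}k\mathcal{B})$ off the Gauss elimination performed in the proof of Theorem~\ref{prop:Green_Funct}.
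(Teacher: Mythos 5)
Your argument is correct, and every step checks out: the boundary data, the reduction of the conditions \eqref{match} to the affine form $a_i+b_i=C\vartheta_i+D$, the identity $\sum_\ell\vartheta_\ell^2-\tfrac1n(\sum_\ell\vartheta_\ell)^2=-B$, and the final sign bookkeeping $(\vartheta_i-\bar\vartheta)(\bar\vartheta-\vartheta_\ell)=-\Pi_{i\ell}$ all land on the stated formula. The starting point is the same as the paper's -- substitute the stationary scattering \emph{Ansatz} into the vertex conditions -- but the way you solve the resulting linear system is genuinely different. The paper works with the matrix form \eqref{eq:Bound_Cond}, writes $\mathcal{S}(k)=-(\mathcal{A}+\mathrm{i}k\mathcal{B})^{-1}(\mathcal{A}-\mathrm{i}k\mathcal{B})$, and then inverts $\mathcal{A}+\mathrm{i}k\mathcal{B}$ by explicit Gauss elimination, tracking the quantities $c_j$ and $e_{ij}$ through the row reduction exactly as in the proof of Theorem~\ref{prop:Green_Funct}. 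You instead exploit the low-rank structure of the conditions \eqref{match} directly: the first family forces the vertex values to lie on an affine line in $\vartheta$, which collapses the $n\times n$ system to two scalar unknowns $C$ and $D$. Your route is more elementary and transparent (the nonvanishing of $1+\mathrm{i}k\beta B$ for real $k>0$ is the only solvability issue, and it is immediate), at the cost of relying on the specific form of \eqref{match} rather than on the general $(\mathcal{A},\mathcal{B})$ machinery; the paper's route is heavier but reuses verbatim the elimination already set up for the resolvent, which is why the factorization $\mathcal{S}_{ij}(k)=(\tfrac2n-\delta_{ij})-2\mathrm{i}k\Lambda_{ij}(k^2)$ you note at the end falls out of it for free. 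Either proof is acceptable; yours would shorten the paper's computation considerably.
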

\begin{proof}
The scattering matrix can easily be obtained by substituting the scattering solution $\psi(x_i)=\delta_{ij}\exp(-\mathrm{i}kx_j) +\mathcal{S}_{ij} \exp(\mathrm{i}kx_j)$ into the matching conditions \eqref{eq:Bound_Cond},
\[
\mathcal{S}(k)=-(\mathcal{A}+\mathrm{i}k\mathcal{B})^{-1}
(\mathcal{A}-\mathrm{i}k\mathcal{B})\,.
\]
Reasoning in a way similar to the proof of Theorem~\ref{prop:Green_Funct} we get the chain of equivalences
\[
\big(
    -(\mathcal{A}+\mathrm{i}k\mathcal{B})
    \big|
    (\mathcal{A}-\mathrm{i}k\mathcal{B})
\big)
    \sim
    \dots
    \sim
\big(
    I
    \big|
    \underbrace{-(\mathcal{A}+\mathrm{i}k\mathcal{B})^{-1}
    (\mathcal{A}-\mathrm{i}k\mathcal{B})}\limits_{\mathcal{S}(k)}
\big).
\]
Let the numbers $c_j$ and the  matrix $\Pi$ be the same as in the mentioned proof and set
\[
e_{ij}:=\frac2n(c_n
-\mathrm{i}kn^2\beta\Pi_{ij});
\]
using again row manipulations we to pass to the matrix $(\mathcal{C}|\mathcal{E} )$, where
\[
\mathcal{E}=
\mathrm{i}k
\begin{pmatrix}
(e_{11}-c_n)
\prod\limits_{\ell=1}^{n-1}c_\ell
&
e_{12}
\prod\limits_{\ell=1}^{n-1}c_\ell
&\dots
&e_{1n-1}
\prod\limits_{\ell=1}^{n-1}c_\ell
&e_{1n}
\prod\limits_{\ell=1}^{n-1}c_\ell
\\
e_{21}
\prod\limits_{\ell=2}^{n-1}c_\ell
&
(e_{22}-c_n)
\prod\limits_{\ell=2}^{n-1}c_\ell
&\dots
&
e_{2n-1}
\prod\limits_{\ell=2}^{n-1}c_\ell
&
e_{2n}
\prod\limits_{\ell=2}^{n-1}c_\ell
\\
\vdots  &\vdots &\ddots &\vdots &\vdots
\\
e_{n-11}
c_{n-1}
&
e_{n-12}
c_{n-1}
&\dots
&
(e_{n-1n-1}-c_n)
c_{n-1}
&
e_{n-1n}
c_{n-1}
\\
e_{n1}
&e_{n2}
&\dots
&e_{nn-1}
&e_{nn}-
c_n
\end{pmatrix}.
\]
Finally, we divide each row of $(\mathcal{C}|\mathcal{E})$ by the corresponding diagonal entry of $\mathcal{C}$, obtaining thus $(I|\mathcal{S})$,
where $\mathcal{S}$ is the sought scattering matrix and its entries are given by $\mathcal{S}_{ij}= \frac{e_{ij}}{c_n}-\delta_{ij}$.
To complete the proof it is sufficient to use the explicit formul{\ae} for $e_{ij}$ and $c_n$.
\end{proof}

\section{Convergence of the resolvents and spectra} \label{s:convergence}

This section is devoted to proof of the fact that the operator family $(-\Delta^\eps-k^2)^{-1}$ approximates $(-\Delta_\beta-k^2)^{-1}$ in the uniform operator topology. We will do that by demonstrating that the kernel of $(-\Delta^\eps-k^2)^{-1}$ approaches the kernel $\Xi_k$ of $(-\Delta_\beta-k^2)^{-1}$  in $L^2(\Gamma)$ given by Theorem~\ref{prop:Green_Funct}, which, in turn, makes it possible to verify the convergence
of the corresponding operators in the Hilbert-Schmidt norm, and thus, a fortiori, in the uniform norm. To this aim, we are going to construct
the resolvent $(-\Delta^\eps-k^2)^{-1}$ explicitly; this resolvent allows us to determine the structure of the spectrum of the perturbed operator, and we show that the spectrum of  $-\Delta^\eps$ is close to that of $-\Delta_\beta$ for small $\eps$. Our first main result reads

\begin{thm}\label{thm:ResolventConvergence}
As $\eps\to0$,
the family of Hamiltonians $-\Delta^\eps$
converges to $-\Delta_\beta$ in the norm-resolvent sense.
\end{thm}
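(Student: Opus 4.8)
The plan is to establish norm-resolvent convergence by constructing the kernel of $(-\Delta^\eps-k^2)^{-1}$ explicitly and showing it approaches the kernel $\Xi_k$ of $(-\Delta_\beta-k^2)^{-1}$ from Theorem~\ref{prop:Green_Funct} in the Hilbert--Schmidt norm. Since the perturbation in \eqref{approxop} is the rank-one operator $\frac{\lambda(\eps)}{\eps^3}V_\eps\langle\cdot,V_\eps\rangle_\Gamma$, the natural tool is the Krein/Konno--Kuroda resolvent formula. Writing $R_0^\eps(k):=(-\Delta_0-k^2)^{-1}$ for the Kirchhoff free resolvent with kernel $G_k$ from \eqref{free:Green_Function}, the second resolvent identity for a rank-one perturbation gives
\begin{equation*}
(-\Delta^\eps-k^2)^{-1}=R_0^\eps(k)-\frac{\lambda(\eps)\eps^{-3}\,(R_0^\eps(k)V_\eps)\langle\cdot,R_0^\eps(\bar k)V_\eps\rangle_\Gamma}{1+\lambda(\eps)\eps^{-3}\langle R_0^\eps(k)V_\eps,V_\eps\rangle_\Gamma}\,.
\end{equation*}
The entire problem thus reduces to the asymptotic analysis, as $\eps\to0$, of the scalar denominator and of the function $R_0^\eps(k)V_\eps$.

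First I would insert the scaling $V_\eps(\cdot)=V(\cdot/\eps)$ and perform the change of variables $x_i=\eps u_i$ in every integral. Because $V$ has compact support in the unit ball and zero mean, $\int_\Gamma V\,\D x=0$, the leading term in the expansion of $(R_0^\eps V_\eps)(x_i)$ vanishes: using $\exp(\mathrm{i}k\eps u)=1+\mathrm{i}k\eps u+\OO(\eps^2)$ inside $G_k$, the constant contribution is killed by the mean-zero condition, and the surviving first-order term produces precisely the moments $\vartheta_i=\int_0^\infty u_iV(u_i)\,\D u_i$ defined in \eqref{form:theta}. This is where the quantities $\vartheta_i$, $A$, and $B$ enter: expanding $G_k(\eps u_i,\eps u_j)=\frac{\mathrm{i}}{2k}[\delta_{ij}\exp(\mathrm{i}k\eps|u_i-u_j|)+(\frac2n-\delta_{ij})\exp(\mathrm{i}k\eps(u_i+u_j))]$ and tracking orders of $\eps$, the diagonal piece contributes the $\min\{x_i,y_i\}$ kernel whose integral against $V\otimes V$ is exactly $-A$ from \eqref{form:A}, while the off-diagonal exponentials contribute terms assembling into $B$ and $\Pi_{ij}$ from \eqref{form:B}.

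The crucial computation is the denominator $1+\lambda(\eps)\eps^{-3}\langle R_0^\eps(k)V_\eps,V_\eps\rangle_\Gamma$. A naive count gives an $\eps^{-3}$ prefactor against an inner product that is $\OO(\eps^2)$ after two cancellations from the zero-mean condition, leaving an apparent $\eps^{-1}$ divergence; the point of the specific scaling $\lambda(\eps)/\eps^3$ together with the expansion \eqref{scaling} is that this works out to a finite limit exactly when the resonance condition $\lambda_0A=1$ holds. I expect the denominator to expand as something of the form $\lambda(\eps)\eps^{-3}\cdot\eps^2(\text{const}\cdot(1-\lambda_0A)/\lambda_0+\ldots)$ plus the $\mathrm{i}k$-dependent $B$-term at the next order, and the delicate algebra is to verify that the $\eps^{-1}$ singular part cancels precisely under $\lambda_0A=1$, whereupon the $\eps^1\lambda_1$ correction in \eqref{scaling} supplies the finite constant $1/(\lambda_1A^2)=\beta$ in the denominator $1+\mathrm{i}k\beta B$, and that off-resonance ($\lambda_0A\neq1$) the fraction vanishes, recovering $\beta=0$ in \eqref{beta}. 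This resonance bookkeeping is the main obstacle: one must carry the Taylor expansions of both $\lambda(\eps)$ and the exponentials to second order simultaneously, keeping the $k$-dependence, and confirm that the rank-one correction converges in operator norm to $\frac{\beta\Pi_{ij}}{1+\mathrm{i}k\beta B}\exp(\mathrm{i}k(x_i+y_j))$, matching $\Lambda_{ij}(k^2)$ from \eqref{eq:Green_Function}.

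Finally, to upgrade pointwise-kernel convergence to norm-resolvent convergence I would bound the difference of the two rank-one kernels in $L^2(\Gamma\times\Gamma)$, which controls the Hilbert--Schmidt norm and hence dominates the operator norm. Since $R_0^\eps(k)V_\eps$ is supported (through $V_\eps$) in the shrinking ball but multiplied by the growing factor $\eps^{-3}$, the estimates must be uniform in the spatial variables; the compact support of $V$ and the explicit exponential decay $\exp(\mathrm{i}k(x_i+y_j))$ with $\Im k>0$ keep all integrals convergent and furnish the needed $\eps$-uniform Hilbert--Schmidt bounds. Once the difference of kernels is shown to be $\OO(\eps)$ in $L^2(\Gamma\times\Gamma)$ for a fixed $k^2$ in the common resolvent set, the theorem follows.
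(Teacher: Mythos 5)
Your proposal follows essentially the same route as the paper: the rank-one (Krein/Konno--Kuroda) resolvent formula for $-\Delta^\eps$, asymptotic expansion of the scalar denominator using the zero-mean and compact-support properties of $V$ to produce $A$, $B$, $\vartheta_i$, $\Pi_{ij}$ and the resonance condition $\lambda_0A=1$, followed by an upgrade from pointwise kernel convergence to Hilbert--Schmidt (hence norm-resolvent) convergence via the exponential decay of the kernels. The paper merely fixes $k=\mathrm{i}\varkappa$ with $\varkappa$ large rather than a general $k$ with $\Im k>0$, but the computations and the logic are the same as yours.
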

\begin{proof}
To compare the resolvents of $\Delta^\eps$ and $\Delta_\beta$, fix $k:=\mathrm{i}\varkappa$  belonging to the resolvent sets of both operators; as we shall see later from the explicit structure of the resolvents this can be achieved, e.g., by choosing $\varkappa>0$ large enough.

We observe that the resolvent $(-\Delta^\eps+\varkappa^2)^{-1}$ is an integral operator in $L^2(\Gamma)$ which has the kernel of the following form,
\begin{align}
\begin{aligned}
\label{Resolv_Perturbed}
(-\Delta^\eps+\varkappa^2)^{-1}(x_i,y_j)
&=
G_{\mathrm{i}\varkappa}(x_i,y_j)
\\
&\phantom{=}\,-\zeta_\eps
\big((-\Delta_0+\varkappa^2)^{-1}
V_\eps\big)(x_i)
\big((-\Delta_0+\varkappa^2)^{-1}V_\eps\big)(y_j)\,,
\end{aligned}
\end{align}
with $G_{\mathrm{i}\varkappa}$ from \eqref{free:Green_Function} being the Green function of the free Hamiltonian $-\Delta_0$, and with the constant $\zeta_\eps$ of the form
\[
\zeta_\eps:=
\bigg(\frac{\eps^3}{\lambda(\eps)}+
\langle (-\Delta_0+ \varkappa^2)^{-1}V_\eps,V_\eps\rangle_\Gamma
\bigg)^{-1}.
\]
This expression is obtained in the same way as in the particular case $n=2$, i.e. for point interactions on the line -- see, e.g., \cite{AK}.

We start with the asymptotic behavior of the expression $\zeta_\eps$ as $\eps\to0$. Using the Taylor expansion of $\exp(-\eps\varkappa(x_i+y_i))$ together with the fact that $V$ has a compact support and zero mean, one derives the formula
\begin{align*}
\langle (-\Delta_0+\varkappa^2)^{-1}V_\eps,V_\eps
\rangle_\Gamma
&=
\frac{\eps^2}{2\varkappa}
\Bigg[
\sum_{i=1}^n
\int_0^\infty \int_0^\infty
V(x_i)V(y_i)
\exp(-\eps\varkappa|x_i-y_i|)
\,\D x_i\D y_i
\\
&\phantom{=}\,\,
-
\sum_{i=1}^n
\int_0^\infty \int_0^\infty
V(x_i)V(y_i)
\exp(-\eps\varkappa(x_i+y_i))
\,\D x_i\D y_i
\\
&\phantom{=}\,\,
+\frac2n
\sum_{i=1}^n
\sum_{j=1}^n
\int_0^\infty \int_0^\infty
V(x_i)V(y_j)
\exp(-\eps\varkappa(x_i+y_j))
\,\D x_i\D y_i
\Bigg]
\\
&=
-A\eps^3 +\varkappa B\eps^4+\mathcal{O}(\eps^5)\quad \mathrm{as} \;\;\eps\to0\,.
\end{align*}
Recall that the constants $A$ and $B$ are defined via formul{\ae} \eqref{form:A} and \eqref{form:B}, respectively. We thus conclude that
\[
\zeta_\eps=
\frac{1}{\eps^3\big(\frac1{\lambda_0}-A+\eps(\varkappa B-\frac{\lambda_1}{\lambda_0^2})\big)
}
+\mathcal{O}\bigg(\frac1{\eps^2}\bigg)\,,
\quad\eps\to0\,,
\]
and finally, since $\beta=\frac{\lambda_0^2}{\lambda_1}$ when $\lambda_0=\frac1A$, that
\begin{equation}\label{zeta_epsilon}
\zeta_\eps=
\frac{-\beta}{\eps^4(1-\varkappa\beta B)}
+\OO\bigg(\frac1{\eps^3}\bigg)\,,\quad
\eps\to0\,.
\end{equation}

In the next step we have to discuss the asymptotical behavior of the functions $((-\Delta_0+\varkappa^2)^{-1}V_\eps)(x_i) ((-\Delta_0+\varkappa^2)^{-1} V_\eps)(y_j)$. In a similar manner as above we find that
\begin{align*}
\big((-\Delta_0+\varkappa^2)^{-1}V_\eps\big)(x_i)
&=
\sum_{j=1}^n
\int_0^\infty
G_{\mathrm{i}\varkappa}(x_i,y_j)V_\eps(y_j)
\,\D y_j
\\
&=
-\eps^2\,
\exp(-\varkappa x_i)
\Bigg[
\sum_{j=1}^n
\bigg(\frac1n-\delta_{ij}\bigg)\vartheta_j
+\mathcal{O}(\eps)
\Bigg]\,,\quad \eps\to0\,,
\end{align*}
which, in turn, gives the desired relation,
\begin{multline}
\label{eq:VV}
\big(
    (-\Delta_0+\varkappa^2)^{-1}
    V_\eps
\big)(x_i)
\big(
    (-\Delta_0+\varkappa^2)^{-1}V_\eps
\big)(y_j)
\\
=\eps^4\exp(-\varkappa(x_i+y_j))
\big(\Pi_{ij}+\mathcal{O}(\eps)\big)\,,\;\;\;\eps\to0\,.
\end{multline}
Combining \eqref{Resolv_Perturbed}--\eqref{eq:VV} we find that
\begin{align*}
(-\Delta^\eps+\varkappa^2)^{-1}(x_i,y_j)
&=
G_{\mathrm{i}\varkappa}(x_i,y_j)
+
\exp(-\varkappa(x_i+y_j))
\bigg(\frac{\beta\Pi_{ij}}
{1-\varkappa\beta B}
+\mathcal{O}(\eps)\bigg),\\
&=
G_{\mathrm{i}\varkappa}(x_i,y_j)
+
\exp(-\varkappa(x_i+y_j))
\big(\Lambda_{ij}(-\varkappa^2)
+\mathcal{O}(\eps)\big)
\end{align*}
holds as $\eps\to0$. This allows us to conclude that the kernel $(-\Delta^\eps+\varkappa^2)^{-1}(x_i,y_j)$ of the approximating operator resolvent converges as $\eps\to0$ to the kernel $\Xi_{\mathrm{i}\varkappa}(x_i,y_j)$ pointwise. We further observe that the function $(-\Delta^\eps +\varkappa^2)^{-1}(x_i,y_j)$ decays exponentially, hence the integral expressing its norm converges and the dominated convergence theorem implies
that this function tends to the kernel $\Xi_{\mathrm{i}\varkappa}(x_i,y_j)$ in $L^2(\Gamma\times\Gamma)$. From this the resolvent converges in the Hilbert-Schmidt norm follows, i.e.
\[
\lim_{\eps\to0}
\big\|(-\Delta^\eps+\varkappa^2)^{-1}-
(-\Delta_\beta+\varkappa^2)^{-1}\big\|_2=0\,,
\]
and thus, a fortiori, the family $\{-\Delta^\eps\}_{\eps\geq0}$ approximates $-\Delta_\beta$ in the norm-resolvent topology.
\end{proof}

As an immediate corollary of Theorem~\ref{thm:ResolventConvergence} we get the following result.

\begin{thm} \label{thm:Approx_Op_Spec}
(i) The essential spectrum of $-\Delta^\eps$ is purely absolutely continuous and covers the nonnegative real axis, while the singularly continuous spectrum is empty.

(ii) If $\beta<0$, the operator $-\Delta^\eps$ has for all $\eps$ small enough exactly one negative eigenvalue $-\varkappa_\eps^2$ with the asymptotic behavior
\[
-\varkappa_\eps^2=
-\frac1{\beta^2B^2}+\OO(\eps)\,,\quad \eps\to0\,,
\]
which tends to the eigenvalue of the limit operator. If $\beta>0$, the perturbed operator has no eigenvalues.

(iii) If $\beta=0$, then there are two possibilities. If $\lambda_0<\frac1A$, the approximating operator has for all $\eps$ small enough exactly one negative eigenvalue $-\varkappa_\eps^2$ with the asymptotics
\[
-\varkappa_\eps^2
=
-\frac{(A-\frac1{\lambda}_0)^2}{\eps^2B^2}
+\OO\bigg(\frac1\eps\bigg)\,,\quad
\eps\to0\,,
\]
tending to $-\infty$. In the opposite case the operator $-\Delta^\eps$ has no eigenvalues.
\end{thm}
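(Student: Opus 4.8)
The plan is to read off all three assertions from the rank-one structure of the resolvent \eqref{Resolv_Perturbed} together with the asymptotic data already assembled in the proof of Theorem~\ref{thm:ResolventConvergence}. For part~(i) I would argue exactly as for the limit operator: since $-\Delta^\eps$ is a self-adjoint rank-one perturbation of $-\Delta_0$, the difference of the resolvents is of rank one, hence trace class, so Weyl's theorem \cite[Theorem~XIII.14]{RS} gives $\sigma_\mathrm{ess}(-\Delta^\eps)=\sigma_\mathrm{ess}(-\Delta_0)=[0,\infty)$, the Kato--Rosenblum theorem identifies the absolutely continuous parts, and the explicit form \eqref{Resolv_Perturbed}, whose $\varkappa$-dependence continues analytically across the positive axis, rules out a singular continuous component via \cite[Theorem~XIII.20]{RS}, precisely as in Theorem~\ref{thm:LimOpSpectr}.

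The heart of parts (ii)--(iii) is the observation that, by \eqref{Resolv_Perturbed}, a number $-\varkappa^2<0$ is an eigenvalue of $-\Delta^\eps$ iff $\zeta_\eps$ has a pole there, that is, iff
\[
h(\varkappa,\eps):=\frac{\eps^3}{\lambda(\eps)}+\langle(-\Delta_0+\varkappa^2)^{-1}V_\eps,V_\eps\rangle_\Gamma=0,\qquad\varkappa>0.
\]
I would first establish existence and uniqueness by monotonicity: the map $\varkappa\mapsto\langle(-\Delta_0+\varkappa^2)^{-1}V_\eps,V_\eps\rangle_\Gamma$ is strictly decreasing, its derivative being $-2\varkappa\|(-\Delta_0+\varkappa^2)^{-1}V_\eps\|^2<0$; it vanishes as $\varkappa\to\infty$ and, because $V$ has zero mean, tends to the finite value $-A\eps^3(1+o(1))$ as $\varkappa\to0^+$. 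Positivity of the free resolvent forces this limit to be nonnegative, so $A<0$ for nontrivial $V$, and $h(\cdot,\eps)=0$ has a unique root iff $-\eps^3/\lambda(\eps)$ lies in the open interval $(0,-A\eps^3)$. Dividing by $\eps^3$ and letting $\eps\to0$ turns this into the threshold conditions $\lambda_0<0$ and $1/\lambda_0>A$, i.e. $\lambda_0<1/A$; this already delivers the existence/absence dichotomy of part~(iii). In the critical case $\lambda_0A=1$ (so $\beta\neq0$) the leading term sits exactly at the endpoint $-A\eps^3$, and one must inspect the $\OO(\eps)$ correction through \eqref{scaling}, finding a root precisely when $\lambda_1<0$, i.e. $\beta<0$, which is part~(ii).

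For the asymptotics I would feed into $h=0$ the expansion $\langle(-\Delta_0+\varkappa^2)^{-1}V_\eps,V_\eps\rangle_\Gamma=-A\eps^3+\varkappa B\eps^4+\OO(\eps^5)$ from the proof of Theorem~\ref{thm:ResolventConvergence}, together with $\eps^3/\lambda(\eps)=\eps^3/\lambda_0-(\lambda_1/\lambda_0^2)\eps^4+\OO(\eps^5)$ from \eqref{scaling}. When $\lambda_0A=1$ the $\eps^3$-terms cancel (here $1/\lambda_0=A$) and balancing the $\eps^4$-terms yields $\varkappa_\eps=\lambda_1A^2/B+\OO(\eps)=1/(\beta B)+\OO(\eps)$ by \eqref{beta}, whence $-\varkappa_\eps^2=-1/(\beta^2B^2)+\OO(\eps)$; here $B<0$ by Cauchy--Schwarz and the distinctness of the $\vartheta_i$, so $\varkappa_\eps>0$ exactly when $\beta<0$. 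Since $\varkappa_\eps=\OO(1)$ the expansion is self-consistent, and the bare convergence $-\varkappa_\eps^2\to-1/(\beta^2B^2)$ also follows abstractly from the norm-resolvent convergence of Theorem~\ref{thm:ResolventConvergence}. When $\beta=0$ the $\eps^3$-terms no longer cancel, the balance forces $\varkappa_\eps\sim1/\eps$, and solving gives $\varkappa_\eps=(A-1/\lambda_0)/(\eps B)+\OO(1)$, i.e. the stated runaway $-\varkappa_\eps^2=-(A-1/\lambda_0)^2/(\eps^2B^2)+\OO(1/\eps)$.

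The main obstacle is exactly this last regime. Because $\varkappa_\eps\to\infty$ with $\eps\varkappa_\eps$ of order one, the Taylor expansion just used (which presupposes $\eps\varkappa\to0$) is no longer uniformly valid, so I would instead work with the exact scaled identity $\langle(-\Delta_0+\varkappa^2)^{-1}V_\eps,V_\eps\rangle_\Gamma=\frac{\eps^2}{2\varkappa}F(\eps\varkappa)$, where $F$ is the entire function of $w=\eps\varkappa$ read off from the three-term formula in the proof of Theorem~\ref{thm:ResolventConvergence}, with $F(0)=0$, $F'(0)=-2A$ and $F''(0)=4B$. The eigenvalue equation then reduces to $g(w_\eps)=-1/\lambda(\eps)$ with $g(w):=F(w)/(2w)$ decreasing from $-A$ to $0$; monotonicity again secures a unique root $w_\eps$, and linearising $g(w)=-A+Bw+\OO(w^2)$ about the origin reproduces the displayed leading term. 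The genuine technical burden is the uniform control of the remainder, both the $\OO(w^2)$ nonlinearity of $g$ and the $\OO(\eps)$ perturbation of $\lambda(\eps)$, that legitimises this linearisation to the order claimed.
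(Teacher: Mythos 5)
Your overall strategy coincides with the paper's: parts (ii)--(iii) are read off from the unique pole of $\zeta_\eps$, i.e.\ from the root of $\frac{\eps^3}{\lambda(\eps)}+\langle(-\Delta_0+\varkappa^2)^{-1}V_\eps,V_\eps\rangle_\Gamma=0$, using the expansions already assembled in the proof of Theorem~\ref{thm:ResolventConvergence}, while part (i) is obtained exactly as in Theorem~\ref{thm:LimOpSpectr}. The paper's own proof is only a few lines long and simply asserts that the resolvent has a single pole with the asymptotics $\varkappa_\eps=\frac1B\big(\frac{A-1/\lambda_0}{\eps}+\frac{\lambda_1}{\lambda_0^2}+\OO(\eps)\big)$; your monotonicity argument (strict decrease of $\varkappa\mapsto\langle(-\Delta_0+\varkappa^2)^{-1}V_\eps,V_\eps\rangle_\Gamma$ from the exact boundary value $-A\eps^3$ at $\varkappa=0^+$ down to $0$), the resulting existence criterion $0<-1/\lambda(\eps)<-A$, and the sign analysis $A<0$, $B<0$ are genuine improvements over what the paper leaves implicit. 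Parts (i) and (ii) of your argument are sound: in the critical case $\lambda_0A=1$ the root satisfies $\varkappa_\eps=\OO(1)$, hence $w_\eps=\eps\varkappa_\eps\to0$ and the linearization you use is legitimate there.

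The gap sits exactly where you suspect it, in part (iii) with $\lambda_0<1/A$, but it is not a removable technicality. There the equation $g(w_\eps)=-1/\lambda(\eps)$ with $g(w)=F(w)/(2w)$ has its root at $w_\eps\to w_0:=g^{-1}(-1/\lambda_0)$, a point \emph{bounded away from zero}, since $-1/\lambda_0$ lies strictly inside the interval $(0,-A)$. Consequently the remainder in $g(w)=-A+Bw+\OO(w^2)$ is of order one at $w=w_\eps$ and cannot be made small; the value $(A-1/\lambda_0)/B$ is the root of the linearized equation, not of the true one, and the two differ at leading order unless $g$ happens to be affine. Your scheme therefore yields $-\varkappa_\eps^2=-w_0^2/\eps^2+\OO(1/\eps)$ with $w_0$ determined by the transcendental equation $F(w_0)=-2w_0/\lambda_0$, which reproduces the displayed coefficient $(A-1/\lambda_0)^2/B^2$ only to first order in the deviation $A-1/\lambda_0$. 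Note that the paper's own derivation suffers from the same defect --- the expansion $-A\eps^3+\varkappa B\eps^4+\OO(\eps^5)$ is obtained for fixed $\varkappa$, and its error contains contributions of order $\varkappa^2\eps^5$, which are comparable to the retained terms once $\varkappa\sim1/\eps$ --- so you have in fact located a weak point of the published argument rather than introduced a new one; but as a proof of the precise formula stated in (iii), your final step does not close, and no amount of ``uniform control of the remainder'' will legitimize a Taylor expansion evaluated outside any neighbourhood of its base point.
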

\begin{proof}
The arguments used in the proof of Theorem~\ref{thm:LimOpSpectr} together with the precise structure of the resolvent $(-\Delta^\eps-k^2)^{-1}$ give the first statement of the theorem. To obtain the remaining two claims we only need to observe that the resolvent $(-\Delta^\eps+\varkappa^2)^{-1}$ has only one pole at $\varkappa_\eps$ admitting the asymptotics
\[
\varkappa_\eps=\frac1B
\bigg(\frac{A-\frac1{\lambda_0}}\eps+
\frac{\lambda_1}{\lambda_0^2}+\OO(\eps)\bigg)\,,\quad
\eps\to0\,,
\]
and that the operator $\Delta^\eps$ can have only negative eigenvalues.
\end{proof}

\section{Convergence of the scattering matrices} \label{s:scattering}

In the final section we investigate stationary scattering for the pair $(-\Delta^\eps,-\Delta_0)$. Our aim is to show that the corresponding scattering amplitudes are close to those for the pair $(-\Delta_\beta,-\Delta_0)$ in the limit $\eps\to0$. We consider the incoming monochromatic wave $\exp(-\mathrm{i}kx_i)$ approaching the vertex along the edge $\gamma_i$. The corresponding scattering solution $\psi_i^\eps$ has to solve the problem
\begin{equation}\label{scattering:perturbed}
-\psi''+\frac{\lambda(\eps)}{\eps^3}V_\eps(x)
\langle\psi,V_\eps\rangle_\Gamma=k^2\psi
\quad\mathrm{on}\;\;\Gamma,\quad
\psi\in{K}(\Gamma)\,,
\end{equation}
and, by virtue of the compactness of the support of $V$, it takes the form
\[
\psi_i^\eps(x_j)=\delta_{ij}\exp(-\mathrm{i}kx_j)+
\mathcal{S}_{ij}^\eps(k)\exp(\mathrm{i}kx_j)\,,
\quad x_j\geq1\,.
\]
Hence to solve the scattering problem for the Hamiltonian $-\Delta^\eps$ we need to analyze behavior of the amplitudes $\mathcal{S}_{ij}^\eps(k)$ as the scaling parameter $\eps$ approaches zero.

The integro-differential equation \eqref{scattering:perturbed} for the scattering solution $\psi_i^\eps$ can easily be reformulated as an integral equation by using the variation-of-constants method,
\begin{multline}\label{eq:scat_sol}
\psi^\eps_i(x_j)=
-\frac{\lambda(\eps)\langle\psi^\eps_i,V_\eps
\rangle_\Gamma}{k\eps^3}
\int_{x_j}^\eps V_\eps(y_j)
\sin k(x_j-y_j)
\,\D y_j
\\
+\delta_{ij}\exp(-\mathrm{i}kx_j)+
\mathcal{S}^\eps_{ij}(k)\exp(\mathrm{i}kx_j)\,.
\end{multline}
Noting that
\begin{align*}
\psi_i\big|_{x_j=0}&=
\frac{\lambda(\eps)\langle\psi^\eps_i,V_\eps
\rangle_\Gamma}{k\eps^3}
\int_0^\eps V_\eps(y_j)\sin ky_j
\,\D y_j
+\delta_{ij}+
\mathcal{S}_{ij}^\eps(k)\,,
\\
\psi'_i\big|_{x_j=0}&=
-\frac{\lambda(\eps)\langle\psi^\eps_i,V_\eps
\rangle_\Gamma}{\eps^3}
\int_0^\eps V_\eps(y_j)\cos ky_j
\,\D y_j
-\mathrm{i}k\delta_{ij}+\mathrm{i}k
\mathcal{S}_{ij}^\eps(k)\,,
\end{align*}
we substitute these relations into the Kirchhoff matching conditions to conclude that
\begin{align}
\begin{aligned}\label{eq:scat_amplit}
\mathcal{S}^\eps_{ij}(k)&=
\frac{\lambda(\eps)\langle\psi^\eps_i,V_\eps
\rangle_\Gamma}{k\eps^3}
\Bigg[
-\int_0^\eps V_\eps(y_j)
\sin ky_j
\,\D y_j
\\
&
\phantom{=}\,
+\frac1{\mathrm{i}n}
\sum_{\ell=1}^n
\int_0^\eps V_\eps(y_\ell)\exp(\mathrm{i}ky_\ell)
\,\D y_\ell
\Bigg]
+\frac2n-\delta_{ij}
\\
&=
\frac{\lambda(\eps)\langle\psi^\eps_i,V_\eps
\rangle_\Gamma}{\eps}
\Bigg[
\sum_{\ell=1}^n
\bigg(\frac1n-\delta_{\ell j}\bigg)\vartheta_\ell
+\OO(\eps)
\Bigg]+\frac2n-\delta_{ij}.
\end{aligned}
\end{align}
Comparing formul{\ae} \eqref{eq:scat_sol} and \eqref{eq:scat_amplit}, we derive the following Fredholm integral equation (with a degenerate kernel) for the scattering solution,
\[
\psi_i(x_j)=
\langle\psi^\eps_i,V_\eps
\rangle_\Gamma W(x_j)+F(x_j),
\]
where
\begin{multline*}
W(x_j)=
\frac{\lambda(\eps)}{2\mathrm{i}k\eps^3}
\Bigg[
\int_{x_j}^\eps V_\eps(y_j)
\exp(\mathrm{i}k(y_j-x_j))
\,\D y_j
+
\int^{x_j}_0 V_\eps(y_j)
\exp(\mathrm{i}k(x_j-y_j))
\,\D y_j
\\
\phantom{=}\,
+
\sum_{\ell=1}^n
\bigg(\frac2n-\delta_{\ell j}\bigg)
\int_0^\eps V_\eps(y_\ell)
\exp(\mathrm{i}k(x_j+y_\ell))
\,\D y_\ell
\Bigg]
\end{multline*}
and
\[
F(x_j)=-2\mathrm{i}\delta_{ij}\sin kx_j
+\frac2n\exp(\mathrm{i}kx_j);
\]
then as an immediate consequence of this fact we get
\begin{equation}\label{v(eps)}
\langle\psi^\eps_i,V_\eps
\rangle_\Gamma=
\Bigg[
\sum_{j=1}^n\int_0^\eps F(x_j)V_\eps(x_j)
\,\D x_j
\Bigg]
\Bigg[
1-
\sum_{j=1}^n\int_0^\eps W(x_j)V_\eps(x_j)
\,\D x_j
\Bigg]^{-1}
.
\end{equation}
Next we are going to find the asymptotic behavior of the quantity $\langle\psi^\eps_i,V_\eps \rangle_\Gamma$ defined by \eqref{v(eps)}, which will be used further to get the asymptotics of the scattering amplitudes via \eqref{eq:scat_amplit}. To this end, we first denote the numerator of \eqref{v(eps)} as $N$ and analyze its asymptotic behavior,
\begin{align*}
N&=
-2\mathrm{i}
\int_0^\eps V_\eps(x_i)
\sin kx_i
\,\D x_i+
\frac2n\sum_{j=1}^n
\int_0^\eps V_\eps(x_j)
\exp(\mathrm{i}kx_j)
\,\D x_j
\\
&=
-2\mathrm{i}
\eps
\Bigg[
\int_0^1 V(x_i)
\sin k\eps x_i
\,\D x_i+
\frac{\mathrm{i}}n
\sum_{j=1}^n
\int_0^1 V(x_j)
\exp(\mathrm{i}k\eps x_j)
\,\D x_j\Bigg]
\\
&=
2\mathrm{i}k\eps^2
\sum_{j=1}^n\bigg(\frac1n-\delta_{ij}\bigg)\vartheta_j+\OO(\eps^3)\,,
\quad \eps\to0\,.
\end{align*}
Then the denominator of \eqref{v(eps)} can be written as $1-D$, where $D$ behaves as follows,
\begin{align*}
D&=
\frac{\lambda(\eps)}{2\mathrm{i}k\eps^3}
\Bigg[
\sum_{j=1}^n
\int_0^\eps\int_0^\eps
V_\eps(x_j)V_\eps(y_j)
\exp(\mathrm{i}k|x_j-y_j|)
\,\D x_j\D y_j
\\
&\phantom{=}\,
+
\sum_{j=1}^n\sum_{\ell=1}^n
\bigg(\frac2n-\delta_{\ell j}\bigg)
\int_0^\eps \int_0^\eps V_\eps(x_j)V_\eps(y_\ell)
\exp(\mathrm{i}k(x_j+y_\ell))
\,\D x_j\D y_\ell
\Bigg]
\\
&=
\frac{\lambda(\eps)}{2\mathrm{i}k\eps}
\Bigg[
\sum_{j=1}^n
\int_0^1\int_0^1
V(x_j)V(y_j)
\big(\exp(\mathrm{i}k\eps|x_j-y_j|)-
\exp(\mathrm{i}k\eps(x_j+y_j))\big)
\,\D x_j\D y_j
\\
&\phantom{=}\,
+
\frac2n
\sum_{j=1}^n\sum_{\ell=1}^n
\int_0^1 \int_0^1 V(x_j)V(y_\ell)
\exp(\mathrm{i}k\eps(x_j+y_\ell))
\,\D x_j\D y_\ell
\Bigg]
\\
&=\lambda(\eps)\big(A+\mathrm{i}k\eps B+\OO(\eps^2)\big)\,,
\quad\eps\to0\,,
\end{align*}
with the usual definitions of the constants $A$ and $B$. Combining the above asymptotic formul{\ae} for $N$ and $D$ along with \eqref{v(eps)}, we finally conclude that the scattering amplitude $\mathcal{S}_{ij}^\eps(k)$ has the following asymptotic behavior,
\begin{align*}
\mathcal{S}_{ij}^\eps&=\frac2n-\delta_{ij}
+\frac{2\mathrm{i}k\eps\lambda(\eps)}
{1-\lambda(\eps)( A+\mathrm{i}k\eps B)}
\\
&\phantom{=}\,\times
\Bigg[
\sum_{\ell=1}^n\bigg(\frac1n-\delta_{\ell i}\bigg)\vartheta_\ell
\Bigg]
\Bigg[
\sum_{\ell=1}^n\bigg(\frac1n-\delta_{\ell j}\bigg)\vartheta_\ell
\Bigg]
+\OO(\eps)
=\mathcal{S}_{ij}
+\OO(\eps)\,,
\quad \eps\to0\,,
\end{align*}
where the limit values $\mathcal{S}_{ij}$ are defined in Theorem~\ref{thm:LimOp_ScatMat}. In this way we have shown

\begin{thm}\label{thm:ScatMatConv}
For any momentum $k>0$ the on-shell scattering matrix for the pair $(-\Delta^\eps,-\Delta_0)$ converges as $\eps\to0$ to that of $(-\Delta_\beta,-\Delta_0)$, and moreover, there is a constant $C$ such that
\[
\|\mathcal{S}^\eps(k)-\mathcal{S}(k)\|
\leq C\eps\,,\quad \eps\in(0,1]\,,
\]
where $\|\cdot\|$ stands for the operator norm of the matrix.
\end{thm}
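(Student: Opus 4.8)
The plan is to reduce the scattering problem for $-\Delta^\eps$ to the determination of a single scalar quantity, the inner product $\langle\psi^\eps_i,V_\eps\rangle_\Gamma$, exploiting the fact that the perturbation in \eqref{approxop} has rank one. First I would write down the scattering ansatz: since $V$ is supported inside the unit ball, for $x_j\geq\eps$ the solution $\psi^\eps_i$ obeys the free equation on each edge and therefore reduces to the pure plane-wave form $\delta_{ij}\exp(-\mathrm{i}kx_j)+\mathcal{S}^\eps_{ij}(k)\exp(\mathrm{i}kx_j)$, which isolates the amplitudes $\mathcal{S}^\eps_{ij}(k)$ as the unknowns to be tracked.

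Next I would convert the integro-differential equation \eqref{scattering:perturbed} into the integral equation \eqref{eq:scat_sol} by the variation-of-constants method. Evaluating $\psi^\eps_i$ and its derivative at the vertex and inserting them into the Kirchhoff matching conditions yields the expression \eqref{eq:scat_amplit} for $\mathcal{S}^\eps_{ij}(k)$, in which the only surviving unknown is the scalar $\langle\psi^\eps_i,V_\eps\rangle_\Gamma$. To close the system I would project the integral equation onto $V_\eps$; because the kernel is degenerate, this produces a single linear equation whose solution is the explicit ratio \eqref{v(eps)}, with numerator $N$ and denominator $1-D$.

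The core of the argument is then the asymptotic analysis of $N$ and $D$ as $\eps\to0$. Rescaling through $V_\eps(\cdot)=V(\cdot/\eps)$ and Taylor-expanding the exponentials $\exp(\mathrm{i}k\eps(\cdot))$, I would invoke the zero-mean hypothesis $\int_\Gamma V\,\D x=0$ to annihilate the leading contributions, so that the first surviving terms are governed by the moments $\vartheta_i$ of \eqref{form:theta} and the constants $A$, $B$ of \eqref{form:A} and \eqref{form:B}. This gives $N=2\mathrm{i}k\eps^2\sum_{j}(\frac1n-\delta_{ij})\vartheta_j+\OO(\eps^3)$ and $D=\lambda(\eps)(A+\mathrm{i}k\eps B+\OO(\eps^2))$. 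Feeding these back into \eqref{eq:scat_amplit}, the powers of $\eps$ generated by rescaling the integrals and by the smallness of $N$ exactly offset the singular factor $\eps^{-3}$, reducing the amplitude to a finite ratio, which I would then compare term by term with the limit matrix of Theorem~\ref{thm:LimOp_ScatMat}, collecting the remainders into a single $\OO(\eps)$ term.

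The main obstacle I anticipate is twofold. First, one must propagate the cancellations through the singular $\eps^{-3}$ scaling with sufficient precision to retain an $\OO(\eps)$ remainder, which forces keeping two orders in each Taylor expansion and applying the zero-mean condition at exactly the right places. Second, and more delicate, is controlling the denominator $1-\lambda(\eps)(A+\mathrm{i}k\eps B)$ uniformly for $\eps\in(0,1]$: the resonance condition $\lambda_0A=1$ that defines a nonzero $\beta$ via \eqref{beta} makes the real part $1-\lambda_0A$ of the leading term vanish, so one must check that the surviving imaginary part, of order $k\eps\lambda(\eps)B$, keeps the denominator bounded away from zero for every $\eps>0$ and every fixed $k>0$. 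This is precisely what prevents the amplitude from blowing up and simultaneously delivers the uniform linear estimate $\|\mathcal{S}^\eps(k)-\mathcal{S}(k)\|\leq C\eps$.
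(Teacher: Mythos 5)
Your proposal follows the paper's argument essentially verbatim: the same reduction of the rank-one scattering problem to the scalar $\langle\psi^\eps_i,V_\eps\rangle_\Gamma$ via the variation-of-constants integral equation \eqref{eq:scat_sol}, the same closed formula \eqref{v(eps)} from the degenerate kernel, the same asymptotics for $N$ and $D$ using the zero-mean condition and the moments $\vartheta_i$, $A$, $B$, and the same final comparison with Theorem~\ref{thm:LimOp_ScatMat}. Your closing observation about the denominator $1-\lambda(\eps)(A+\mathrm{i}k\eps B)$ in the resonant case $\lambda_0A=1$ is a sound and worthwhile point of care, but it does not change the route.
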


\subsection*{Acknowledgments}
The authors are grateful to Rostyslav Hryniv for careful reading of the manuscript and valuable remarks.
The second author thanks Yuri Golovaty for stimulating
discussions.
The research was supported by the Czech Science Foundation within
the project P203/11/0701 and by the European Union with the project ``Support for research teams on CTU'' CZ.1.07/2.3.00/30.0034.


\end{document}